\newtheorem{theorem}{Theorem}[section]
\newtheorem{corollary}[theorem]{Corollary}
\newtheorem{lemma}[theorem]{Lemma}
\theoremstyle{remark}
\numberwithin{equation}{section}
\newcommand{\Acal}{\mathscr{A}}
\newcommand{\Fcal}{\mathscr{F}}
\newcommand{\Lcal}{\mathscr{L}}
\newcommand{\Ocal}{\mathscr{O}}
\newcommand{\Pro}{\mathbb{P}}
\newcommand{\C}{\mathbb{C}}
\newcommand{\Q}{\mathbb{Q}}
\newcommand{\R}{\mathbb{R}}
\newcommand{\A}{\mathbb{A}}
\newcommand{\Pic}{\mathrm{Pic}}
\newcommand{\vol}{\mathrm{vol}}
  \DeclareFontFamily{U}{wncy}{}
    \DeclareFontShape{U}{wncy}{m}{n}{<->wncyr10}{}
    \DeclareSymbolFont{mcy}{U}{wncy}{m}{n}
    \DeclareMathSymbol{\Sha}{\mathord}{mcy}{"58}
\begin{document}
\title[]{A criterion for non-density of integral points}

\author{Natalia Garcia-Fritz}
\address{ Departamento de Matem\'aticas,
Pontificia Universidad Cat\'olica de Chile.
Facultad de Matem\'aticas,
4860 Av.\ Vicu\~na Mackenna,
Macul, RM, Chile}
\email[N. Garcia-Fritz]{natalia.garcia@uc.cl}%

\author{Hector Pasten}
\address{ Departamento de Matem\'aticas,
Pontificia Universidad Cat\'olica de Chile.
Facultad de Matem\'aticas,
4860 Av.\ Vicu\~na Mackenna,
Macul, RM, Chile}
\email[H. Pasten]{hector.pasten@uc.cl}%

\thanks{N.G.-F. was supported by ANID Fondecyt Regular grant 1211004 from Chile. H.P. was supported by ANID Fondecyt Regular grant 1230507 from Chile.}
\date{\today}
\subjclass[2020]{Primary: 14G05; Secondary: 11J25, 11J97} %
\keywords{Integral points, Lang--Vojta conjecture, Runge's method, Diophantine approximation}%

\begin{abstract} We give a general criterion for Zariski degeneration of integral points in the complement of a divisor $D$ with $n$ components in a variety of dimension $n$ defined over $\mathbb{Q}$ or over a quadratic imaginary field. The key condition is that the intersection of the components of $D$ is not well-approximated by rational points, and we discuss several cases where this assumption is satisfied. We also prove a GCD bound for algebraic points in varieties, which can be of independent interest.
\end{abstract}

\maketitle



\section{Introduction} 

\subsection{Non-density of integral points} It is a classical theorem of Siegel that if $X$ is a smooth projective curve over a number field $L$ and if $D>0$ is a reduced effective non-zero divisor such that $X-D$ is hyperbolic, then any set of $D$-integral points in $X$ is finite. This result gives a complete answer to the problem of density of integral points on affine curves. A celebrated result of Faltings \cite{Faltings1} removed the condition that $D$ be non-zero, thus covering the case of rational points in hyperbolic projective curves.

For a higher dimensional variety $X$ over $L$, the Lang--Vojta conjecture predicts that if $D\ge 0$ is an effective simple normal crossings divisor on $X$ such that $K_X+D$ is big ($K_X$ being a canonical divisor for $X$), then any set of $D$-integral points on $X$ is Zariski degenerate. When $X$ is a subvariety of an abelian variety and $D=0$ (the case of rational points) the problem is solved by results of Faltings \cite{Faltings2,Faltings3} building on ideas of Vojta \cite{VojtaCompact}; this is essentially all we know for $D=0$. On the other hand, a number results have been obtained when $D$ has several components by reduction to the case of integral points in semi-abelian varieties (cf. \cite{VojtaSA}) and by applications of Schmidt's subspace theorem (see for instance \cite{CZcurves, CZsurfaces, LevinAnn}). We can also mention that some constructions provide particular examples where Zariski degeneration of integral points has been established on varieties with a dense set of rational points and with $D$ irreducible, see for instance \cite{FaltingsGarden, Zirred}. 

Another technique, which allows to attack the case of $D$ with few components (although over specific number fields) comes from Runge's method. Before Siegel's theorem on integral points of curves, Runge \cite{Runge} proved some special cases over $\Q$ under the assumption that the divisor $D$ on the curve consisted of at least $2$ components defined over $\Q$. Runge's method has been extended to the higher dimensional setting by Levin \cite{Levin0,LevinRunge} (see also Le Fourn's extension \cite{LeFourn}), and over $\Q$ the main technical assumption is that the divisor $D$ on the variety consists of several components $D_j$ defined over $\Q$ having empty total intersection $\cap_jD_j$. This last assumption has been relaxed by Levin and Wang \cite{LW}, by requiring that $\cap_jD_j$ is finite and its geometric points are in fact rational over $\Q$, among other technical assumptions on the geometry of the divisors $D_j$. 

Our goal is to give a general criterion for Zariski degeneration of integral points over $\Q$ (and over quadratic imaginary fields) when $\cap_j D_j$ is a $0$-cycle that cannot be approximated too well by rational points. This turns out to be a rather mild assumption and we will discuss several cases where it is satisfied. 

\subsection{Approximation of points} Let $L$ be a number field, let $X$ be a projective variety over $L$, and let $\Lcal$ be a line sheaf on $X$. Attached to this data there is a height function
$$
h(\Lcal, -):X(L)\to \R
$$
normalized by the factor $1/[L:\Q]$. If $D$ is a divisor on $X$ we simply write $h(D,-)$ instead of $h(\Ocal(D),-)$.  

Let $Y$ be an effective reduced subscheme on $X$ defined over $L$ and let $S$ be a finite set of places of $L$. One introduces the proximity function
$$
m_S(Y,-): (X-Y)(L)\to \R
$$
also normalized by the factor $1/[L:\Q]$. This is the sum of the local heights for places in $S$, see \cite{SilvermanHeight, SilvermanGCD} for the relevant definitions. The height and the proximity are well-defined up to adding a bounded function. 

For a line sheaf $\Lcal$ on $X$ we define the approximation coefficient of $Y$ relative to $\Lcal$ and $S$, denoted by $\tau_S(Y,\Lcal)$, as the infimum of all $\tau\ge 0$ such that the inequality
$$
m_S(Y,x) < \tau\cdot h(\Lcal, x) + O(1)
$$
holds for all rational points $x\in X(L)$ outside a Zariski closed set properly contained in $X$ (and possibly depending on $\tau$). As usual, $O(1)$ stands for a bounded function. If no such $\tau$ exists, then we set $\tau_S(Y,\Lcal)=\infty$. If $D$ is a divisor on $X$ we simply write $\tau_S(Y,D)$ instead of $\tau_S(Y,\Ocal(D))$. We will use the approximation coefficient in the case $Y$ is $0$-dimensional.

We will be mainly interested in the case when $S$ is chosen as the set of Archimedian places of $L$, in which case we simply write $m_\infty(Y,-)=m_S(Y,-)$ and $\tau_\infty(Y,\Lcal)=\tau_S(Y,\Lcal)$.
\subsection{A general criterion}

Let us recall the notion of a set of integral points with respect to a divisor. If $D$ is a reduced effective divisor on $X$ defined over $L$, a set of rational points $\Phi\subseteq (X-D)(L)$ is said to be \emph{$(D,S)$-integral} if we have
$$
m_S(D,x)=h(D,x)+O(1)
$$
as $x$ varies in $\Phi$. When $S$ is chosen as the set of Archimedian places of $L$ we simply say that $\Phi$ is $D$-integral. For instance, if $X\subseteq \Pro^N$ is given by equations with coefficients in $O_L$ and $D=\{x_0=0\}\cap X$ is the reduced divisor on $X$ obtained by intersecting with the hyperplane at infinity, then (after identifying $\A^N$ with $\Pro^N-\{x_0=0\}$ in the standard way) the set of points in $\A^N\cap X$ with coordinates in $O_L$ is $D$-integral.

Our Main Criterion for non-density of integral points is the following:

\begin{theorem}[Main criterion]\label{ThmMC} Let $k$ be $\Q$ or a quadratic imaginary field. Let $X$ be a smooth projective variety over $k$ of dimension $n$. Let $D_1,...,D_n$ be reduced effective non-zero divisors over $k$ such that $D=\sum_{j=1}^n D_j$ is a simple normal crossings divisor, and such that for every component $Y$ defined over $k$ of the   intersection $\cap_{j=1}^n D_j$ (which is a $0$-cycle)  we have
\begin{equation}\label{EqnHypMC}
\tau_\infty(Y,D_j)<1, \quad \mbox{ for each }j=1,2,...,n.
\end{equation}
Then there is a properly contained Zariski closed set $Z_0\subseteq X$ such that the following holds:

Given any set of $D$-integral points $\Phi\subseteq (X-D)(k)$  we have
\begin{equation}\label{EqnMC}
\min_{1\le j\le n} h(D_j,x) = O(1)\mbox{ as }x\in \Phi-Z_0(k)\mbox{ varies}.
\end{equation}
In particular, under the same assumptions,
\begin{itemize}
\item[(i)] If each $D_j$ has positive Kodaira--Iitaka dimension, then every set of $D$-integral points is Zariski degenerate, and
\item[(ii)] If each $D_j$ is big, then there is a properly contained Zariski closed set $Z\subseteq X$ with the property that each set $\Phi$ of $D$-integral points is contained in $Z$ up to finitely many points.
\end{itemize}
\end{theorem}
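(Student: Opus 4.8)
The plan is to exploit that $k$ has a single Archimedean place $v_0$, so that $X(k_{v_0})$ (with $k_{v_0}=\R$ or $\C$) is compact, and to feed this into the approximation hypothesis \eqref{EqnHypMC}. First I would note that every $D$-integral point is automatically $D_j$-integral for each $j$: since $D=\sum_j D_j$, both proximity and height are additive up to $O(1)$, so on $\Phi$ one has $\sum_j m_\infty(D_j,x)=m_\infty(D,x)+O(1)=h(D,x)+O(1)=\sum_j h(D_j,x)+O(1)$, and combined with the elementary bound $m_\infty(D_j,x)\le h(D_j,x)+O(1)$ each summand $h(D_j,x)-m_\infty(D_j,x)$ is bounded below with bounded sum, forcing
\[
m_\infty(D_j,x)=h(D_j,x)+O(1)\qquad(1\le j\le n).
\]
Next I would show that $\min_j h(D_j,x)$ being large forces the image $x_{v_0}\in X(k_{v_0})$ of $x$ into a small neighborhood of $\bigcap_{j}D_j$. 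On the complement of an $\varepsilon$-neighborhood of $\bigcap_j D_j$ every point avoids some $D_j$, so covering this compact set by finitely many neighborhoods on each of which some $\lambda_{D_j,v_0}$ is bounded shows that $\min_j\lambda_{D_j,v_0}\le M_\varepsilon$ there. As $v_0$ is the only Archimedean place, this gives $\min_j m_\infty(D_j,x)\le M_\varepsilon+O(1)$, and by the previous display the same bounds $\min_j h(D_j,x)$; contrapositively, once $\min_j h(D_j,x)>M_\varepsilon+O(1)$ the point $x_{v_0}$ lies $\varepsilon$-close to $\bigcap_j D_j$.

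I would then fix $\varepsilon$ so small that this neighborhood is a disjoint union of balls around the geometric points of $\bigcap_j D_j$, inside each of which the simple normal crossings hypothesis yields local coordinates $z_1,\dots,z_n$ with $D_j=\{z_j=0\}$. By the standard formula for the local height of a closed subscheme in terms of local generators of its ideal (here $\bigcap_j D_j$ has ideal $(z_1,\dots,z_n)$; see \cite{SilvermanGCD}) one gets $\lambda_{\bigcap_j D_j,\,v_0}=\min_j\lambda_{D_j,v_0}+O(1)$ on each ball. Writing $Y$ for the $k$-component (Galois orbit) of $\bigcap_j D_j$ containing the point that $x_{v_0}$ approaches, closeness to $Y$ at $v_0$ equals closeness to that single point, so
\[
m_\infty(Y,x)=\min_j m_\infty(D_j,x)+O(1)=\min_j h(D_j,x)+O(1).
\]
Now \eqref{EqnHypMC} applies to $Y$: choosing $\tau_j\in(\tau_\infty(Y,D_j),1)$ there is a proper Zariski closed $Z_{Y,j}\subsetneq X$ off which $m_\infty(Y,x)<\tau_j\,h(D_j,x)+O(1)$. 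Taking the index $j^\ast$ that realizes $\min_j h(D_j,x)$ and combining with the displayed equality yields $(1-\tau_{j^\ast})h(D_{j^\ast},x)=O(1)$, i.e.\ $\min_j h(D_j,x)=O(1)$. Setting $Z_0:=\bigcup_Y\bigcup_j Z_{Y,j}$, a finite union over the finitely many $k$-components $Y$, proves \eqref{EqnMC}.

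The two applications follow from \eqref{EqnMC} and Northcott's theorem (the points have degree $\le[k:\Q]\le2$). For (i), if $D_j$ has positive Kodaira--Iitaka dimension then a suitable multiple of $D_j$ defines a rational map with positive-dimensional image under which bounded $D_j$-height points have finitely many images, so $\{x:h(D_j,x)=O(1)\}$ lies in a proper closed $V_j\subsetneq X$; since $\min_j h(D_j,x)=O(1)$ puts each $x\in\Phi-Z_0(k)$ into some $V_j$, we get $\Phi\subseteq Z_0\cup\bigcup_j V_j$. For (ii), bigness lets one replace $V_j$ by the augmented base locus $\mathbf{B}_+(D_j)$, off which bounded $D_j$-height points are finite, so $Z:=Z_0\cup\bigcup_j\mathbf{B}_+(D_j)$ is a proper closed set, independent of $\Phi$, containing each $\Phi$ up to a $\Phi$-dependent finite set.

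The hard part is the passage in the first two paragraphs: showing that, \emph{at the single Archimedean place}, simultaneous good approximation to all $n$ divisors is the same as good approximation to their $0$-dimensional intersection, with error terms uniform over $\Phi$. This is exactly where the Runge-type restriction to $k=\Q$ or imaginary quadratic is indispensable, since then $X(k_{v_0})$ is compact and there is no room for $x$ to approach different $D_j$ at different Archimedean places while staying away from $\bigcap_j D_j$.
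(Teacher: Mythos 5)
Your proposal is correct and follows essentially the same Runge-type argument as the paper's proof: individual $D_j$-integrality extracted from additivity of heights and proximities, the observation that at the single archimedean place of $k$ a point can approach only one component of $\cap_{j}D_j$, the identification $m_\infty(\cap_j D_j,x)=\min_j m_\infty(D_j,x)+O(1)$ coming from the simple normal crossings hypothesis, and finally the approximation hypothesis \eqref{EqnHypMC} with $\tau<1$ to force $\min_j h(D_j,x)=O(1)$. The only differences are presentational: you prove the two geometric facts inline (by compactness of $X(k_{v_0})$ and by snc local coordinates) where the paper cites Silverman's theory of proximity functions for subschemes, and you spell out the Northcott-type deductions of (i) and (ii) that the paper dismisses as formal consequences.
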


This result is proved using a variation of Runge's method in Section \ref{SecMainProof}.

\subsection{Applications} The Main Criterion \ref{ThmMC} can be applied whenever one has some control on the approximation properties of algebraic points in $X$. Here we will present a (non-exhaustive) list of cases where the Main Criterion is applicable. Some of these cases are a consequence of a general GCD bound for rational points that we prove in Section \ref{SecGCD} and which might be of independent interest ---in particular, using our GCD bound we will prove a special case of Vojta's main conjecture for rational homogeneous spaces of dimension up to $10$. 

In this section $k$ is either $\Q$ or a quadratic imaginary field, $X$ is a smooth projective variety over $k$ of dimension $n$, and $D_1,...,D_n$ are reduced effective non-zero divisors on $X$ defined over $k$ such that $D=\sum_{j=1}^n D_j$ is a simple normal crossings divisor.

Our first application builds on a well-known approximation theorem on abelian varieties that follows from Roth's theorem \cite{Serre}.

\begin{theorem}[Using differentials]\label{Thm1} Assume that $\dim H^0(X,\Omega^1_X)>0$. If each $D_j$ is big, then there is a properly contained Zariski closed set $Z\subseteq X$ with the property that each set $\Phi$ of $D$-integral points is contained in $Z$ up to finitely many points.
\end{theorem}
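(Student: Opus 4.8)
The plan is to deduce Theorem \ref{Thm1} directly from the Main Criterion \ref{ThmMC}(ii): since each $D_j$ is assumed big, it suffices to verify the approximation hypothesis \eqref{EqnHypMC}, i.e. that $\tau_\infty(Y,D_j)<1$ for every component $Y$ defined over $k$ of $\cap_{j=1}^n D_j$ and every $j$. In fact I would aim for the stronger equality $\tau_\infty(Y,D_j)=0$. If $\cap_{j=1}^n D_j$ has no component defined over $k$ the condition is vacuous, so assume such a $Y$ exists; we may also assume $X(k)\neq\emptyset$, since otherwise every set of $D$-integral points is empty and there is nothing to prove. Fixing a base point in $X(k)$, the Albanese morphism $\alpha\colon X\to A$ to the Albanese variety $A=\mathrm{Alb}(X)$ is then defined over $k$, and the hypothesis $\dim H^0(X,\Omega^1_X)>0$ guarantees that $A$ has positive dimension, which is precisely what makes $A$ useful as a target for a Diophantine approximation statement.

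The key external input is the well-known consequence of Roth's theorem on abelian varieties \cite{Serre}: for any fixed ample line sheaf $\Mcal$ on $A$ and any algebraic point $a\in A(\overline{k})$ one has $\tau_\infty(a,\Mcal)=0$; that is, for every $\varepsilon>0$ the bound $m_\infty(a,P)<\varepsilon\,h(\Mcal,P)+O(1)$ holds as $P$ ranges over $A(k)$ outside a proper closed subset. I would apply this to the point $a=\alpha(Y)$. The proximity of $Y$ is controlled through $\alpha$: since $Y\subseteq \alpha^{-1}(a)$, functoriality of local height functions under $\alpha$ gives
\[
m_\infty(Y,x)\le m_\infty(a,\alpha(x))+O(1)
\]
for $x\in (X-D)(k)$. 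Next, because $D_j$ is big one can write a positive multiple $mD_j=A_j+E_j$ with $A_j$ ample and $E_j$ effective, and compare heights: using that the ample class $A_j$ dominates the fixed sheaf $\alpha^*\Mcal$ and that $h(E_j,x)$ is bounded below off $\supp E_j$, one gets $h(\alpha^*\Mcal,x)\le c\cdot h(D_j,x)+O(1)$ for a constant $c>0$ and all $x$ outside $\supp E_j$. Combining these with $h(\Mcal,\alpha(x))=h(\alpha^*\Mcal,x)+O(1)$ yields
\[
m_\infty(Y,x)\le \varepsilon\cdot h(\Mcal,\alpha(x))+O(1)\le \varepsilon c\cdot h(D_j,x)+O(1)
\]
outside a proper closed subset; letting $\varepsilon\to 0$ shows $\tau_\infty(Y,D_j)=0<1$, as desired.

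With \eqref{EqnHypMC} verified for every $j$ and every component $Y$ defined over $k$, the conclusion is exactly Theorem \ref{ThmMC}(ii). The main technical point to get right is the height/proximity bookkeeping: one must justify the functoriality $m_\infty(Y,x)\le m_\infty(a,\alpha(x))+O(1)$ (handling the non-reducedness of the scheme-theoretic fibre $\alpha^{-1}(a)$, which only increases proximity) and the domination $h(\alpha^*\Mcal,x)\le c\,h(D_j,x)+O(1)$ coming from bigness, while keeping track of the proper closed exceptional sets so that they remain admissible in the definition of $\tau_\infty$. The genuinely nontrivial ingredient, however, is the approximation theorem on $A$ from \cite{Serre}; everything else is a transfer of that statement along $\alpha$, combined with the positivity of $D_j$.
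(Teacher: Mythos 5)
Your proposal is correct and takes essentially the same route as the paper: the paper packages your exact argument into Lemma \ref{Lemmaq} (transferring the Roth-type approximation theorem of \cite{Serre} along a non-constant morphism to an abelian variety, plus the bigness comparison $h(f^*\Fcal,x)\le M\cdot h(\Lcal,x)+O(1)$ off a proper closed set), applies it to the Albanese map, and then invokes Theorem \ref{ThmMC}(ii). The only cosmetic difference is that you speak of a single image point $a=\alpha(Y)$ whereas the paper works with the $0$-cycle $Y_0=f(Y)$, which is the cleaner bookkeeping when the $k$-component $Y$ has several geometric points, but this does not affect the argument.
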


As a consequence, we will get the following result without the assumption on differentials.

\begin{theorem}[Small numerical rank]\label{Thm2} Let $r$ be the rank of the group generated by the classes of the divisors $D_j$ modulo numerical equivalence and assume that $r<n$. Suppose that each $D_j$ is big. Then every set of $D$-integral points is Zariski degenerate.
\end{theorem}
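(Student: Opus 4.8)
The plan is to reduce to Theorem \ref{Thm1} by a dichotomy according to whether $X$ carries nonzero global $1$-forms. If $\dim H^0(X,\Omega^1_X)>0$, then Theorem \ref{Thm1} applies verbatim (all $D_j$ are big by hypothesis) and yields the even stronger conclusion that every $D$-integral set is contained in a fixed proper Zariski closed set up to finitely many points; in particular it is Zariski degenerate. So the real content lies in the complementary case $\dim H^0(X,\Omega^1_X)=0$, where Theorem \ref{Thm1} is unavailable and the hypothesis $r<n$ must be brought to bear.

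Assume now $\dim H^0(X,\Omega^1_X)=0$, so that $q(X)=0$ and $\Pic^0(X)=0$; hence numerically trivial divisor classes are torsion, i.e. the kernel of $\Pic(X)\to N^1(X)$ is finite. Since the $n$ classes $[D_1],\dots,[D_n]$ span a subgroup of rank $r<n$, they satisfy a nontrivial integral relation $\sum_{j=1}^n a_j[D_j]=0$ in $N^1(X)$ with the $a_j$ not all zero. Thus $\sum_j a_jD_j$ is numerically trivial, hence torsion in $\Pic(X)$, so $N\sum_j a_jD_j=\Div(f)$ for some integer $N\ge 1$ and some $f\in\bar k(X)^\times$. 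Because $D=\sum_jD_j$ is simple normal crossings the $D_j$ share no component, so $\sum_j a_jD_j\neq 0$ as a divisor and $f$ is nonconstant; and since $\sum_j a_jD_j$ is defined over $k$, replacing $f$ by a suitable constant multiple via Hilbert's Theorem 90 lets us take $f\in k(X)^\times$. By construction the support of $\Div(f)$ is contained in $D$.

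I would then run Runge's method on $f$. For $x\in\Phi$ the integrality hypothesis $m_\infty(D,x)=h(D,x)+O(1)$ forces the non-archimedean local heights of $x$ along the $D_j$ to be uniformly bounded, so the contribution of the finite places to $h(f(x))$ is $O(1)$. Here the choice $k=\Q$ or $k$ imaginary quadratic enters decisively: such $k$ has a single archimedean place, so the product formula pins the archimedean contribution to $\log|f(x)|$ against the bounded finite contribution, giving $h(f(x))=O(1)$ on all of $\Phi$. By Northcott's theorem the set $f(\Phi)\subseteq\Pro^1(k)$ is then finite, so $\Phi$ is contained in the finite union of fibers $\{f=c\}$ with $c\in f(\Phi)$. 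As $f$ is nonconstant this union is a proper Zariski closed subset of $X$, and therefore $\Phi$ is Zariski degenerate, as claimed.

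The main obstacle is the case $\dim H^0(X,\Omega^1_X)=0$, and within it the two points just highlighted: promoting the numerical relation supplied by $r<n$ to an honest rational function $f$ with divisor supported on $D$ and defined over $k$ (this is where $q(X)=0$ and Hilbert 90 are used), and the Runge-type height estimate, whose validity depends essentially on $k$ having exactly one archimedean place. I expect the bookkeeping of local heights---turning the $O(1)$ in the definition of integrality into a genuine bound on $h(f(x))$---to be the most delicate part, since one must control the zeros and the poles of $f$ simultaneously and argue that the single archimedean place cannot let the height escape.
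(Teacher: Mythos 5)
Your proposal is correct and takes essentially the same route as the paper: reduce to Theorem \ref{Thm1} unless $\dim H^0(X,\Omega^1_X)=0$, then use $\Pic^0(X)=0$, the rank hypothesis $r<n$, and Hilbert's Theorem 90 to produce a nonconstant $f\in k(X)^\times$ with $\mathrm{div}(f)$ supported on $D$, and finish by observing that $f$ maps $D$-integral points to $([0]+[\infty])$-integral points of $\Pro^1(k)$, which form a finite set because $k$ has a single archimedean place. Your explicit product-formula and Northcott argument is precisely the justification of the paper's final (unexpanded) finiteness claim.
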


Note that the condition $r<n$ is satisfied, for instance, if the Picard rank of $X$ is less than $n$.

The next application uses our previously mentioned GCD bound for rational points.

\begin{theorem}[Using the volume of big divisors]\label{Thm3} Assume that each $D_j$ is big. Note that $Y=\cap_{j=1}^nD_j$ is either empty or zero dimensional; let $d\ge 0$ be the maximal number of geometric points in a component of $Y$ defined over $k$. If for each $j$ we have $\vol(D_j)>d$, then there is a properly contained Zariski closed set $Z\subseteq X$ with the property that each set $\Phi$ of $D$-integral points is contained in $Z$ up to finitely many points.
\end{theorem}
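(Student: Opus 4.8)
The plan is to deduce Theorem~\ref{Thm3} from the Main Criterion \ref{ThmMC} by showing that the volume hypothesis $\vol(D_j)>d$ forces the approximation condition \eqref{EqnHypMC}, namely $\tau_\infty(Y',D_j)<1$ for every component $Y'$ of $Y=\cap_{j=1}^n D_j$ defined over $k$. Since each $D_j$ is assumed big, conclusion (ii) of the Main Criterion immediately gives the desired containment in a proper Zariski closed set up to finitely many points, so the entire content lies in verifying the hypothesis on approximation coefficients.

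First I would fix a component $Y'$ of $Y$ defined over $k$ and let $e\le d$ be its number of geometric points; these are conjugate over $k$, so $Y'$ is a $0$-dimensional reduced closed subscheme whose geometric points all lie on each $D_j$. The key is to bound $\tau_\infty(Y',D_j)$ from above in terms of $e$ and $\vol(D_j)$. This is precisely where our GCD bound for rational points (proved in Section~\ref{SecGCD}) enters: applied to the big divisor $D_j$ and to the $0$-cycle $Y'$, it should yield an inequality of the shape $m_\infty(Y',x)<(e/\vol(D_j))\cdot h(D_j,x)+O(1)$ valid for all rational points $x$ outside a proper Zariski closed subset. Granting such a bound, we read off $\tau_\infty(Y',D_j)\le e/\vol(D_j)\le d/\vol(D_j)$, and the hypothesis $\vol(D_j)>d$ makes this ratio strictly less than $1$, which is exactly \eqref{EqnHypMC}.

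The main obstacle is establishing the GCD-type estimate with the correct constant $e/\vol(D_j)$ relating the proximity to $Y'$ and the height against the big divisor $D_j$. The heuristic is that one may approximate a point $x$ close to the $0$-cycle $Y'$ only at a rate controlled by how many sections of (multiples of) $D_j$ vanish along $Y'$: if $D_j$ is big then for large $m$ the space $H^0(X,\Ocal(mD_j))$ has dimension growing like $\vol(D_j)m^n/n!$, while vanishing to high order at the $e$ points of $Y'$ imposes a number of conditions growing like $e\cdot m^n/n!$ times the chosen vanishing order; comparing these two growth rates produces the ratio $e/\vol(D_j)$ after letting $m\to\infty$. Turning this dimension count into an actual approximation bound requires constructing auxiliary sections vanishing along $Y'$ and applying the filtration/Schmidt-subspace machinery underlying the GCD bound of Section~\ref{SecGCD}, and I expect the careful bookkeeping of the vanishing orders and the extraction of the sharp constant to be the technically delicate part.

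Once the bound $\tau_\infty(Y',D_j)<1$ is in hand for every component $Y'$ defined over $k$ and every $j$, the proof concludes by invoking Theorem~\ref{ThmMC}(ii) verbatim, since all its hypotheses ($X$ smooth projective over $k$ of dimension $n$, $D=\sum_j D_j$ simple normal crossings, each $D_j$ big, and \eqref{EqnHypMC}) are now satisfied. The only remaining care is to confirm that the maximum $d$ over all $k$-components of $Y$ is the right quantity to use uniformly, which follows because \eqref{EqnHypMC} is required component-by-component and each individual $e\le d$.
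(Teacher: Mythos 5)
Your overall route is exactly the paper's: use the GCD bound of Section \ref{SecGCD} to verify hypothesis \eqref{EqnHypMC} for each component $Y'$ of $\cap_j D_j$, then invoke Theorem \ref{ThmMC}(ii). However, your key intermediate inequality is misstated, and the derivation you sketch for it would not produce it. Theorem \ref{ThmGCD} gives, for a component $Y'$ with $e$ geometric points, the bound $h(Y',x) < \left(\sqrt[n]{e/\vol(D_j)}+\epsilon\right)h(D_j,x)+O(1)$ off a proper closed set --- the $n$-th root of the ratio, not the ratio $e/\vol(D_j)$ you claim. The discrepancy comes from your condition count: vanishing to order $\mu$ at $e$ points imposes about $e\binom{n+\mu}{n}\sim e\mu^n/n!$ linear conditions, i.e.\ degree $n$ in the vanishing order, not linear in it; comparing with $\dim H^0(X,\Ocal(sD_j))>\eta s^n/n!$ (for $\eta<\vol(D_j)$) forces $s/\mu>\sqrt[n]{e/\eta}$, and $s/\mu$ is precisely the constant the construction yields. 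So the sharper constant you aim for is not achievable by this method (and is not needed). The slip is harmless here: since $\vol(D_j)>d\ge e$, the $n$-th root $\sqrt[n]{e/\vol(D_j)}$ is still strictly less than $1$, so $\tau_\infty(Y',D_j)<1$ follows all the same and the appeal to Theorem \ref{ThmMC}(ii) closes the proof exactly as you describe.

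Two smaller corrections. First, Theorem \ref{ThmGCD} bounds $h(Y',\cdot)$, not $m_\infty(Y',\cdot)$; you need the standard inequality $m_\infty(Y',x)\le h(Y',x)+O(1)$ coming from the local decomposition of heights --- this is the one explicit step in the paper's proof, and your proposal glosses over it by phrasing the GCD bound directly in terms of proximity. Second, the GCD bound of Section \ref{SecGCD} is not proved by ``filtration/Schmidt-subspace machinery'': it is an elementary argument (construct a global section of $\Lcal^{\otimes s}$ vanishing to high order along $Y'$, then use functoriality of heights under the blow-up), which is exactly why it holds for all algebraic points with no integrality hypothesis and can simply be quoted off the shelf. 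The ``technically delicate part'' you anticipate does not exist; once the constant is corrected, the whole proof is the two-line deduction the paper gives.
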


When $d=0$ we recover a special case of Levin's higher dimensional version of Runge's method \cite{LevinRunge}.
 
On the other hand, if each geometric point of $\cap_{j=1}^nD_j$ is $k$-rational and that the divisors $D_j$ are nef and big, the condition $\vol(D_j)>d$ becomes $D_j^n>1$. In this way we recover large part of the main arithmetic results of \cite{LW}.

Finally, we mention two applications that exploit the connection between Seshadri constants and Diophantine approximation discovered in \cite{McKinnonRoth}.

\begin{theorem}[Globally generated ample line sheaves]\label{Thm4} Assume that $n\ge 2$ and that for each $j$ we have $\Ocal(D_j)\simeq \Lcal_j^{\otimes m_j}$ for some ample globally generated line sheaves $\Lcal_j$ and integers $m_j\ge 2$.  Then there is a properly contained Zariski closed set $Z\subseteq X$ with the property that each set $\Phi$ of $D$-integral points is contained in $Z$ up to finitely many points.
\end{theorem}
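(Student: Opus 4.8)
The plan is to reduce Theorem \ref{Thm4} to the Main Criterion \ref{ThmMC}. To apply Theorem \ref{ThmMC} I must verify that for every component $Y$ defined over $k$ of the intersection $\cap_{j=1}^n D_j$ and for each $j$ we have $\tau_\infty(Y,D_j)<1$. Since $\Ocal(D_j)\simeq\Lcal_j^{\otimes m_j}$ with $m_j\ge 2$, the height transforms linearly, $h(D_j,x)=m_j\cdot h(\Lcal_j,x)+O(1)$, so the approximation coefficient rescales: $\tau_\infty(Y,D_j)=\tau_\infty(Y,\Lcal_j)/m_j$. Thus it suffices to bound $\tau_\infty(Y,\Lcal_j)\le 2\le m_j$, with strict inequality once divided by $m_j\ge 2$, so I want to show $\tau_\infty(Y,\Lcal_j)\le 2$ for the ample globally generated $\Lcal_j$, at least at the zero-dimensional $Y$ in question.

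First I would invoke the connection between Seshadri constants and Diophantine approximation from \cite{McKinnonRoth}. Their main result bounds the approximation constant of an algebraic point $y$ on $X$ relative to an ample line sheaf $\Lcal$ in terms of the Seshadri constant $\epsilon(\Lcal,y)$, giving an inequality of the shape $\tau(y,\Lcal)\le \dim X/\epsilon(\Lcal,y)$ or, in the more refined form available for surfaces and for globally generated sheaves, $\tau(y,\Lcal)\le 2/\epsilon(\Lcal,y)$. The key elementary geometric input is that for a globally generated line sheaf $\Lcal_j$ the Seshadri constant at any point satisfies $\epsilon(\Lcal_j,y)\ge 1$: global generation produces, through the associated morphism to projective space, enough sections vanishing at $y$ to force the Seshadri constant to be at least $1$. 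Feeding $\epsilon(\Lcal_j,y)\ge 1$ into the McKinnon--Roth bound yields $\tau_\infty(y,\Lcal_j)\le 2$ for each geometric point $y$ underlying a $k$-rational component $Y$ of the intersection.

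Combining the two steps, for each $j$ and each relevant $Y$ I obtain
\begin{equation*}
\tau_\infty(Y,D_j)=\frac{\tau_\infty(Y,\Lcal_j)}{m_j}\le\frac{2}{m_j}\le\frac{2}{2}=1,
\end{equation*}
but I need the \emph{strict} inequality $<1$. This is where the hypotheses $n\ge 2$ and the precise form of the McKinnon--Roth bound become essential. The refined bound for globally generated (or very ample) sheaves should actually give $\tau_\infty(y,\Lcal_j)<2$ strictly, or else $\epsilon(\Lcal_j,y)>1$ in the globally generated case, so that after dividing by $m_j\ge 2$ one lands strictly below $1$; alternatively, when some $m_j=2$ one relies on the strict version of the approximation inequality, while for $m_j\ge 3$ the non-strict bound $\tau_\infty(y,\Lcal_j)\le 2$ already suffices. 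I expect the main obstacle to be exactly this borderline case $m_j=2$: securing the strict inequality $\tau_\infty(Y,D_j)<1$ rather than $\le 1$, which forces me to use the sharp form of the Seshadri/approximation estimate together with the global generation hypothesis, and to check that the role of $n\ge 2$ (ensuring $Y$ is genuinely zero-dimensional and that the divisors meet properly so that the Main Criterion's simple normal crossings and intersection hypotheses are in force) is correctly accounted for. Once the strict bound \eqref{EqnHypMC} is established at every $k$-rational component $Y$, Theorem \ref{ThmMC}(ii) applies directly, since each $D_j$ is big (being a positive tensor power of an ample sheaf), yielding the desired Zariski closed set $Z$ containing every set of $D$-integral points up to finitely many points.
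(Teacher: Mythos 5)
Your overall strategy is the same as the paper's: verify hypothesis \eqref{EqnHypMC} of Theorem \ref{ThmMC} via the McKinnon--Roth link between Seshadri constants and Diophantine approximation, using the lower bound $\epsilon(P,\Lcal_j)\ge 1$ for ample globally generated sheaves so that $\epsilon(P,\Ocal(D_j))=m_j\,\epsilon(P,\Lcal_j)\ge m_j$. However, there is a genuine gap, and it is exactly the one you flag yourself: with the form of the bound you invoke, $\tau_\infty(y,\Lcal_j)\le 2/\epsilon(\Lcal_j,y)$, you only reach $\tau_\infty(Y,D_j)\le 2/m_j\le 1$, and none of your proposed rescues works. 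The hope that $\epsilon(\Lcal_j,y)>1$ for globally generated sheaves is false in general (e.g.\ $\Ocal(1)$ on $\Pro^n$ has Seshadri constant exactly $1$ at every point), and there is no ``strict refined bound $<2$'' to appeal to; the factor $2$ you quote is the \emph{curve} case ($n=1$, i.e.\ Roth's theorem) of McKinnon--Roth, which is the worst case of their estimate, not a refinement.

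The correct quantitative input, which is what the paper uses, is Theorem 6.2 of \cite{McKinnonRoth}: in the notation of this paper it gives, for an ample $\Acal$ and a geometric point $y$,
$$
\tau_\infty(y,\Acal)\le \frac{n+1}{n\cdot \epsilon(y,\Acal)},
$$
so that
$$
\tau_\infty(Y,D_j)\le \frac{n+1}{n\cdot m_j}\le \frac{n+1}{2n}\le \frac{3}{4}<1
$$
for $n\ge 2$. This is precisely where the hypothesis $n\ge 2$ enters: it makes the McKinnon--Roth factor $(n+1)/n$ at most $3/2$, strictly better than the borderline factor $2$, and thereby closes the case $m_j=2$. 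Your reading of $n\ge 2$ as ensuring that $Y$ is zero-dimensional is off target, since that already follows from the simple normal crossings assumption on $D$. One further point you gloss over: McKinnon--Roth bounds approximation to a single geometric point, while $Y$ (a component over $k$) may be a Galois orbit of several geometric points; the passage from one to the other uses that $k$ is $\Q$ or imaginary quadratic, hence has a single archimedean place, so a sequence of $k$-rational points can approximate at most one geometric point of $Y$. With the correct constant and this remark, your argument becomes the paper's proof.
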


\begin{theorem}[Large fundamental group]\label{Thm5} Assume that $X_\C$ has large algebraic fundamental group. If each $D_j$ is ample then there is a properly contained Zariski closed set $Z\subseteq X$ with the property that each set $\Phi$ of $D$-integral points is contained in $Z$ up to finitely many points.

Furthermore, if $X$ is a surface, then every set of $D$-integral points in $X$ is finite.
\end{theorem}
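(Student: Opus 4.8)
The plan is to deduce Theorem~\ref{Thm5} from the Main Criterion~\ref{ThmMC} by exploiting the relationship between large fundamental group and Diophantine approximation. The assumption that $X_\C$ has large algebraic fundamental group means that for every positive-dimensional subvariety the image of its fundamental group in $\pi_1^{\mathrm{alg}}(X_\C)$ is infinite; the crucial arithmetic consequence, which I would invoke via the circle of ideas connecting Seshadri constants to approximation as in \cite{McKinnonRoth} together with the theory of integral points on varieties with large fundamental group, is that the algebraic points of $X$ cannot approximate any given point too well. Concretely, I would first verify that the hypothesis \eqref{EqnHypMC} of Theorem~\ref{ThmMC} holds: for each $k$-rational component $Y$ of $\cap_{j=1}^n D_j$ and each ample $D_j$, one needs $\tau_\infty(Y,D_j)<1$. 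The ampleness of each $D_j$ makes $h(D_j,-)$ comparable to a height with respect to any fixed ample class, so the approximation coefficient $\tau_\infty(Y,D_j)$ is controlled by the intrinsic approximability of the point set $Y$ inside $X$.

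The heart of the argument is therefore to show that large fundamental group forces $\tau_\infty(Y,D_j)$ to be strictly less than $1$ for each such $Y$. Here I would use that a variety with large (algebraic) fundamental group admits, after passing to suitable finite \'etale covers associated to $\pi_1^{\mathrm{alg}}$, maps to varieties of general type or to targets where Diophantine approximation is rigid; the presence of infinitely many independent \'etale covers branched in a controlled way gives, by a pullback and Roth-type argument on the covers, a uniform gain in the approximation exponent. Equivalently, the Seshadri constant machinery of \cite{McKinnonRoth} bounds $\tau_\infty(Y,\Lcal)$ from above in terms of a quantity measuring how singular sections of $\Lcal$ can be at $Y$, and large fundamental group ensures this quantity is bounded away from the critical threshold. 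Once $\tau_\infty(Y,D_j)<1$ is established for every $k$-rational component $Y$ and every $j$, the hypotheses of Theorem~\ref{ThmMC} are met, and conclusion~(ii) applies because each ample divisor is in particular big; this yields the Zariski closed set $Z\subseteq X$ containing every set of $D$-integral points up to finitely many points.

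For the final, sharper statement when $X$ is a surface, I would upgrade Zariski degeneration to finiteness. When $n=\dim X=2$, the degeneracy set $Z$ produced above is a finite union of curves together with finitely many points, so it suffices to show that each set of $D$-integral points meets any such curve $C\subseteq X$ in a finite set. The key point is that large fundamental group of $X_\C$ is inherited, in the appropriate sense, by the positive-dimensional subvarieties: each curve $C$ in the degeneracy locus has infinite image of its fundamental group in $\pi_1^{\mathrm{alg}}(X_\C)$, which forces the normalization of $C$ to have genus at least $1$ (indeed the image of $\pi_1$ of a rational or elliptic curve would be too small once the covers are taken into account), so that $C$ is hyperbolic after removing the restriction of the ample divisor $D$. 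I would then apply Siegel's theorem to the $D|_C$-integral points on the smooth projective model of each such $C$: since $C$ is not contained in $D$, the divisor $D|_C$ is a nonzero effective divisor on $C$, and the integrality of $\Phi$ restricts to $D|_C$-integrality on $C$, giving finiteness of $\Phi\cap C$. Combining finiteness on each of the finitely many curves with the finitely many isolated points of $Z$ yields that $\Phi$ itself is finite.

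I expect the main obstacle to be the rigorous passage from the topological/geometric input (large algebraic fundamental group) to the quantitative approximation bound $\tau_\infty(Y,D_j)<1$, since this requires carefully controlling how the infinite tower of \'etale covers interacts with heights and proximity functions at the point set $Y$, and in particular ensuring the gain in the approximation exponent is uniform and strict rather than merely asymptotic. The surface case presents a secondary subtlety: one must confirm that every curve appearing in the degeneracy locus genuinely has the hyperbolicity needed for Siegel's theorem, which relies on the inheritance of large fundamental group by subvarieties and on ruling out the possibility that $D|_C$ is supported at cusps in a way that defeats integrality.
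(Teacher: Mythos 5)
Your overall architecture matches the paper's: verify hypothesis \eqref{EqnHypMC} via the McKinnon--Roth Seshadri-constant machinery, apply part (ii) of Theorem \ref{ThmMC} (ample implies big), and handle the surface case with Siegel's theorem. However, the heart of the matter --- deducing $\tau_\infty(Y,D_j)<1$ from large algebraic fundamental group --- is precisely what you leave unproved: you assert that ``large fundamental group ensures this quantity is bounded away from the critical threshold'' and you yourself flag this as the main obstacle. That is a genuine gap, since nothing in your proposal supplies the uniform, strict gain in approximation exponent across the tower of \'etale covers. The paper closes this gap with a specific external input, a theorem of Di Cerbo--Di Cerbo \cite{CerboCerbo}: if $X_\C$ has large algebraic fundamental group, then for every ample line sheaf $\Acal$ and every geometric point $P$ the \'etale Seshadri constant $\hat{\varepsilon}(P,\Acal)$ is infinite. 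Feeding this into Corollary 8.9 (applied to Theorem 6.2) of \cite{McKinnonRoth} --- which controls approximation constants in terms of Seshadri constants computed on finite \'etale covers --- yields the much stronger conclusion $\tau_S(Y,\Acal)=0$ for every ample $\Acal$, every reduced effective $0$-dimensional cycle $Y$, and every finite set of places $S$ (this is Lemma \ref{LemmaLarge} of the paper). In particular the gain is not merely ``bounded away from the threshold''; it is total, and no delicate uniformity analysis of heights along the cover tower is needed once the Di Cerbo--Di Cerbo result is invoked. Without identifying this (or an equivalent) input, your first part does not close.

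Your surface argument is in substance the paper's: largeness of the fundamental group rules out rational curves, so every curve in the degeneracy locus has geometric genus at least $1$; since $D$ is ample it meets every such curve in a nonzero effective divisor, integrality restricts to the curve, and Siegel's theorem gives finiteness on each of the finitely many curves. One correction: your parenthetical claim that the image of $\pi_1$ of an elliptic curve ``would be too small'' is false --- elliptic curves have infinite fundamental group and are not excluded by the largeness hypothesis. What is true, and all that is needed, is that rational curves are excluded (their normalization $\Pro^1$ is simply connected), so each curve in the degeneracy locus has genus at least $1$ and Siegel's theorem applies.
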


See \cite{Kollar} for the notion of ``large fundamental group''.


\section{Proof of the Main Criterion} \label{SecMainProof}

In this section we prove Theorem \ref{ThmMC}. For this we let $k$ be $\Q$ or a quadratic imaginary field. Let  $\infty$ denote the only place at infinity of $k$. We keep the notation of the statement of Theorem \ref{ThmMC}.

\begin{proof}[Proof of Theorem \ref{ThmMC}] We note that (i) and (ii) are formal consequences of the first part of the statement due to standard finiteness properties of heights. So we only need to prove \eqref{EqnMC}.

Recall our assumption \eqref{EqnHypMC}. Let $\tau$ be a constant such that for every component $Y$ of $\cap_j D_j$ defined over $k$, and each $j$ we have
$$
\tau_\infty(Y,D_j)< \tau < 1.
$$
Then there is $Z_0$ a Zariski closed set properly contained in $X$ (and containing the support of $D$) such that for each such $Y$ and $D_j$ we have 
$$
m_\infty(Y,x)< \tau \cdot h(D_j,x) +O(1)
$$
as $x$ varies in $(X-Z_0)(k)$. Take $\Phi\subseteq (X-Z_0)(k)$ a set of $D$-integral points. In what follows, the bounds for the error terms $O(1)$ depend on the choice of heights, proximity functions, and the set $\Phi$, but not on the particular point $x\in \Phi$ that we are considering. 

Let $x\in \Phi$. Since $\Phi$ is $D$-integral, we have
\begin{equation}\label{EqnProofMC1}
\sum_{j=1}^n h(D_j,x) = \sum_{j=1}^n m_\infty(D_j,x)+O(1).
\end{equation}
Let $W=\cap_{j=1}^n D_j$ and let $Y_1,...,Y_s$ be its components over $k$. Since there is only one place of $k$ under consideration, there an index $i_0$ (depending on $x$)  such that for all $i\ne i_0$ we have
$$
m_\infty(Y_i, x)=O(1).
$$
By definition of the proximity function to a subscheme \cite{SilvermanHeight,SilvermanGCD} and the normal crossings assumption, we have
$$
m_\infty(Y_{i_0},x)=m_\infty(W,x)+O(1)=\min_j m_\infty(D_j,x) + O(1).
$$
Let $j_0$ be an index for which $m_\infty(D_{j_0},x)=\min_j m_\infty(D_j,x)$ (it depends on $x$). We note that by the previous equalities es have
$$
\begin{aligned}
\sum_{j=1}^n m_\infty(D_j,x) &= m_\infty(D_{j_0},x) + \sum_{j\ne j_0} m_\infty(D_{j},x) \\
&= m_\infty(Y_{i_0},x)+ \sum_{j\ne j_0} m_\infty(D_{j},x) + O(1)\\
&= m_\infty(Y_{i_0},x)+ \sum_{j\ne j_0} h(D_{j},x) + O(1)
\end{aligned}
$$
where the last equality is by $D$-integrality. Together with \eqref{EqnProofMC1} we deduce
$$
h(D_{j_0},x) = m_\infty(Y_{i_0},x) + O(1).
$$
By the choice of $\tau$ and $Z_0$ we deduce
$$
h(D_{j_0},x) < \tau \cdot h(D_{j_0},x)+O(1).
$$
Since $\tau<1$ we obtain $h(D_{j_0},x) =O(1)$. By $D$-integrality we have
$$
\begin{aligned}
h(D_{j_0},x)&=m_\infty(D_{j_0},x) + O(1)\\
& = \min_j m_\infty(D_j,x) + O(1) = \min_j h(D_j,x) +O(1)
\end{aligned}
$$
and the result follows.
\end{proof}


\section{A general GCD bound} \label{SecGCD}

In this section $L$ is a number field (not necessarily $\Q$ or a quadratic imaginary field.) 

\subsection{The result}\label{SecGCD1} Let $X$ be a smooth projective variety over $L$ of dimension $n$. For a reduced effective $0$-cycle $Y$ on $X$ defined over $L$ one defines a height function
$$
h(Y,-): X(\overline{L})\to \R
$$
normalized to $\Q$. Namely, we let $\pi:X'\to X$ be the blow up of $X$ along $Y$ with exceptional divisor $E$, and let the height $h(Y,-)$ be induced by $h(E,-)$ on $X'$ ---the height on algebraic points relative to a line sheaf  or a divisor is standard, see for instance \cite{SilvermanHeight}. Let $d$ be the number of geometric points of $Y$. With this notation, we show

\begin{theorem}[GCD bound]\label{ThmGCD} Let $\Lcal$ be a big line sheaf on $X$ and let $\epsilon>0$. There is a properly contained Zariski closed set $Z\subseteq X$ such that 
$$
h(Y,x) < \left(\sqrt[n]{\frac{d}{\vol(\Lcal)}} + \epsilon\right) h(\Lcal,x)+O(1)
$$
as $x$ varies in $(X-Z)(\overline{L})$.
\end{theorem}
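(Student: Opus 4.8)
The plan is to prove the bound with a single auxiliary global section of a large power of $\Lcal$ vanishing to high order along $Y$, and to extract the inequality from the functoriality of heights on the blow-up $\pi\colon X'\to X$ used to define $h(Y,-)$. First I would fix a rational parameter $\lambda<\sqrt[n]{\vol(\Lcal)/d}$, chosen close enough to the upper limit that $1/\lambda<\sqrt[n]{d/\vol(\Lcal)}+\epsilon$, and then look for a nonzero section $s\in H^0(X,\Lcal^{\otimes m}\otimes\Ical)$, where $\Ical=\bigcap_{i=1}^d \mathfrak{m}_{P_i}^t$ is the ideal sheaf of functions vanishing to order $\ge t$ at each of the $d$ geometric points $P_i$ of $Y$, with $t=\lfloor\lambda m\rfloor$. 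Since $X$ is smooth of dimension $n$, the colength of $\Ical$ is $d\binom{t+n-1}{n}=d\,t^n/n!\,(1+o(1))$, while bigness of $\Lcal$ gives $\dim H^0(X,\Lcal^{\otimes m})=\vol(\Lcal)\,m^n/n!\,(1+o(1))$. As $\lambda^n<\vol(\Lcal)/d$, the section $s$ exists once $m$ is fixed large enough; because $Y$ and its scheme structure are $L$-rational, the sheaf $\Ical$ is defined over $L$ and $s$ may be taken over $L$.

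Next I would transfer $s$ to $X'$. Since $\pi^{-1}Y=E$ is the exceptional divisor and $s$ vanishes to order $\ge t$ at every $P_i$, the pullback $\pi^*s$ vanishes to order $\ge t$ along $E$, so it descends to a nonzero section $s'\in H^0\!\big(X',\pi^*\Lcal^{\otimes m}\otimes\Ocal(-tE)\big)$ whose divisor $D'=\Div(\pi^*s)-tE$ is effective. I would then invoke the elementary lower bound $h(\Ocal(D'),x')\ge O(1)$, valid for all $x'$ off $\supp D'$, together with the functorial identities $h(\pi^*\Lcal,x')=h(\Lcal,\pi(x'))$ and $h(E,x')=h(Y,\pi(x'))$ (the latter being the definition of $h(Y,-)$). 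Applying these to the strict transform $x'=\pi^{-1}(x)$ of any algebraic point $x\notin Y$ yields
$$
m\,h(\Lcal,x)-t\,h(Y,x)\ge O(1),
$$
that is $h(Y,x)\le (m/t)\,h(\Lcal,x)+O(1)$. Since $m$ is fixed with $m/t$ close to $1/\lambda<\sqrt[n]{d/\vol(\Lcal)}+\epsilon$, this is exactly the desired inequality.

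The decisive point, which avoids any appeal to the subspace theorem, is that one section already fixes the exceptional set: I would take $Z=\pi(\supp D')\cup\supp Y$, which is Zariski closed (the image of a divisor under the proper morphism $\pi$, hence of dimension $\le n-1$) and properly contained in $X$, and defined over $L$ since $s$ and $\pi$ are. For every $x\in (X-Z)(\overline L)$ the point $x'$ avoids $\supp D'$, so the displayed inequality holds with an $O(1)$ independent of $x$, including its degree over $L$.

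The main obstacle I anticipate is the bookkeeping of the two asymptotics so that the leading constant is exactly $\sqrt[n]{d/\vol(\Lcal)}$: one must control the $o(m^n)$ error in $\dim H^0(X,\Lcal^{\otimes m})$ coming from bigness rather than ampleness of $\Lcal$ and the $o(t^n)$ error in the colength count, and let the freedom $\lambda\uparrow\sqrt[n]{\vol(\Lcal)/d}$ absorb them into the $+\epsilon$. A secondary point to handle with care is the normalization of the height and Weil functions on $X'$, so that the functorial identities and the uniform lower bound $h(\Ocal(D'),-)\ge O(1)$ for effective divisors hold with a single $O(1)$ valid for all algebraic points; this is standard but is precisely where the dependence on the chosen local heights enters.
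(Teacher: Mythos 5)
Your proposal is correct and follows essentially the same route as the paper's proof: both construct, by counting dimensions against the volume of $\Lcal$, an auxiliary divisor in $|\Lcal^{\otimes m}|$ vanishing to order roughly $\sqrt[n]{d/\vol(\Lcal)}^{-1}\,m$ at the points of $Y$, pull it back to the blow-up so that a multiple of $E$ splits off as an effective part, and conclude via functoriality of heights and the lower bound for heights of effective divisors off their support. Your treatment of the $L$-rationality of the section and the explicit exceptional set $Z=\pi(\supp D')\cup\supp Y$ only makes explicit what is implicit in the paper.
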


The height $h(Y,-)$ with respect to a subvariety of codimension bigger than $1$ is a generalization of the GCD of two numbers, and as discovered in \cite{SilvermanGCD} it is closely related to Vojta's conjecture, at least in the case of rational points. Indeed, the canonical class of the blow-up variety $X'$ is related to that of $X$ via the formula
$$
K_{X'} = (n-1)E + \pi^*K_X.
$$
Let $\Acal$ be a big line sheaf on $X$ and let $\epsilon>0$. Vojta's main conjecture \cite{VojtaThesis} applied to $X'$ then gives a properly contained Zariski closed subset $Z_0\subseteq X'$ depending on the previous data such that 
$$
(n-1)h(E,x) + h(\pi^*K_{X},x) < \epsilon \cdot h(\pi^*\Acal,x)
$$
as $x$ varies on $(X'-Z_0)(L)$. Therefore (for $n>1$) one gets
\begin{equation}\label{EqVojtaGCD}
h(Y,x) < \frac{1}{n-1} \cdot h(-K_{X},x) + \epsilon \cdot h(\Acal,x)
\end{equation}
as $x$ varies on $(X'-\pi(Z_0))(L)$. 

After the work of Bugeaud, Corvaja, and Zannier \cite{BCZ} a number of GCD bounds have been obtained for integral points with respect to a non-trivial divisor, using the Subspace Theorem, see for instance \cite{WangYasufuku}. However, Theorem \ref{ThmGCD} holds for algebraic points and no integrality condition is required. See also \cite{Grieve} for a GCD bound valid for rational points in a special case.

\subsection{Rational homogeneous spaces} The special case of rational homogeneous spaces leads to an interesting application of Theorem \ref{ThmGCD}. In this section we let $d=1$, that is, $Y$ consists of a single $L$-rational point of $X$.

\begin{theorem} Suppose that $X$ is a rational homogeneous space. Let $\epsilon>0$. Then there is a properly contained Zariski closed subset $Z_0\subseteq X$ such that
$$
h(Y,x) < \left(\frac{1}{2\sqrt[n]{n!}}+\epsilon\right)h(-K_X,x) + O(1)
$$
as $x$ varies over $(X-Z_0)(\overline{L})$.
\end{theorem}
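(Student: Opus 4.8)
The plan is to apply Theorem \ref{ThmGCD} (the GCD bound) with the big line sheaf $\Lcal$ chosen to be the anticanonical sheaf $\Ocal(-K_X)$, which is ample on any rational homogeneous space $X=G/P$, together with the standing assumption $d=1$. With these choices the GCD bound immediately yields
$$
h(Y,x) < \left(\sqrt[n]{\frac{1}{\vol(-K_X)}}+\epsilon'\right) h(-K_X,x)+O(1)
$$
outside a proper Zariski closed set, so the entire problem reduces to establishing the volume lower bound $\vol(-K_X)\ge 2^n\, n!$. Since $-K_X$ is ample, its volume equals the top self-intersection number $(-K_X)^n$, so the target is the purely intersection-theoretic inequality $(-K_X)^n\ge 2^n\, n!$ for $X$ a rational homogeneous space of dimension $n$.

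\smallskip

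To prove this inequality I would use the structure theory of $X=G/P$. The anticanonical class is $-K_X=\sum_{\alpha}\alpha$, the sum being over the positive roots $\alpha$ of $G$ that do not belong to the Levi part of $P$ (equivalently, $-K_X=2\rho_{G/P}$ where $\rho_{G/P}$ is the relevant half-sum), and the degree $(-K_X)^n$ can be computed as an intersection number on the flag variety. The cleanest route is via the Weyl dimension formula / Borel--Weil: the self-intersection $(-K_X)^n$ equals $n!$ times the leading coefficient of the Hilbert polynomial of $-K_X$, which in turn is expressible as a product over the relevant positive roots $\prod_{\alpha}\frac{\langle -K_X,\alpha^\vee\rangle}{\langle 2\rho_{G/P},\alpha^\vee\rangle}$ type expression. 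Because $-K_X$ is the \emph{twice} the minimal ample generator in the appropriate normalization, each factor contributing to the product is at least $2$, which is precisely where the factor $2^n$ will come from, and the residual product of the root-theoretic data over the $n$ positive roots accounts for the $n!$. The key numerical input is that for the anticanonical (as opposed to a minimal ample) class the pairing $\langle -K_X,\alpha^\vee\rangle\ge 2$ for every relevant positive root $\alpha$.

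\smallskip

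Concretely I would argue as follows. Write $\vol(-K_X)=(-K_X)^n=n!\cdot\deg_{-K_X}X$, where $\deg$ is the degree of $X$ under the embedding by $-K_X$ (or the leading term of the corresponding Hilbert polynomial). By the Weyl-type product formula this degree equals $\prod_{\alpha}\frac{\langle\rho+{-K_X}\ \text{data}\rangle}{\text{(normalizing pairing)}}$, and monotonicity in the ample cone lets me replace $-K_X$ by twice a minimal ample class $2H$, giving $\deg_{-K_X}X\ge 2^n\deg_H X\ge 2^n$ since the degree of $X$ under any ample class is a positive integer. Combining, $\vol(-K_X)\ge 2^n\, n!$, hence $\sqrt[n]{1/\vol(-K_X)}\le \frac{1}{2\sqrt[n]{n!}}$, and absorbing the slack between $\epsilon'$ and $\epsilon$ completes the proof.

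\smallskip

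The main obstacle I anticipate is the intersection-theoretic inequality $(-K_X)^n\ge 2^n n!$ rather than the reduction itself, which is essentially automatic. Establishing this cleanly requires being careful about the correct normalization of the minimal ample generator on $G/P$ (the Picard group can have rank $>1$ when $P$ is not maximal, so ``minimal ample'' must be interpreted via an effective, big-and-nef class whose self-intersection is a positive integer), and verifying that the anticanonical pairing bound $\langle -K_X,\alpha^\vee\rangle\ge 2$ holds uniformly. A safe alternative, if a uniform Lie-theoretic argument proves delicate, is to reduce to the maximal-parabolic (Picard rank one) case $-K_X=i_X H$ with $H$ the ample generator and Fano index $i_X\ge 2$, use $\deg_H X\ge 1$, and then bootstrap to general $P$ by a fibration/product argument over the factors of $X$; this converts the problem into the classical fact that rational homogeneous spaces are Fano of index at least $2$.
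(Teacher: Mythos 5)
Your reduction is exactly the one in the paper: apply Theorem \ref{ThmGCD} with $\Lcal=\Ocal(-K_X)$ and $d=1$, use ampleness of $-K_X$ to write $\vol(-K_X)=(-K_X)^n$, and thereby reduce the theorem to the single inequality $(-K_X)^n\ge 2^n\, n!$ for rational homogeneous spaces of dimension $n$. At that point the paper does not prove the inequality; it cites Snow \cite{Snow}, whose result is precisely this lower bound on the anticanonical degree (it is sharp: for a full flag manifold $G/B$ one has $-K_X=2\rho$ and the Weyl degree formula gives $(-K_X)^n=2^n\,n!$ exactly). The gap in your proposal is that your attempted proof of this inequality does not work, and no argument of the soft type you describe can work, because sharpness at $G/B$ shows the $n!$ cannot be produced by monotonicity in the ample cone plus integrality of degrees.

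Concretely, two steps fail. First, you conflate the degree of $X$ under an ample class $A$ (which is $A^n$) with the leading coefficient of its Hilbert polynomial (which is $A^n/n!$): writing $(-K_X)^n=n!\cdot\deg_{-K_X}X$ and then asserting $\deg_H X\ge 1$ because ``degrees are positive integers'' uses both normalizations simultaneously. With a consistent normalization, your argument ($-K_X-2H$ nef with $H$ ample, $H^n\ge 1$) yields only $(-K_X)^n\ge 2^n H^n\ge 2^n$, far short of $2^n\,n!$; and since $\Pro^n$ has $H^n=1$, nothing better can be extracted along these lines. Second, the claim that each factor in the Weyl-type product is at least $2$ is false: the correct statement is $(-K_X)^n=n!\prod_{\alpha}(-K_X,\alpha)/(\rho,\alpha)$ over the roots $\alpha$ in $\Phi^+\setminus\Phi_L$, and already for $X=\Pro^2$ the factor attached to the highest root $\alpha_1+\alpha_2$ is $(3\omega_2,\alpha_1+\alpha_2)/(\rho,\alpha_1+\alpha_2)=3/2$ (similarly $4/3$ occurs for the Grassmannian of planes in $4$-space). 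Only the \emph{product} of these factors is $\ge 2^n$, and proving that is genuinely delicate root-theoretic combinatorics --- it is Snow's theorem, not a pointwise estimate. Your fallback (Picard rank one, Fano index $\ge 2$, degree $\ge 1$, then bootstrap) has the same missing-$n!$ defect, and in addition a general $G/P$ is not a product of Picard-rank-one homogeneous spaces. The correct completion of your argument is simply to invoke the bounds of \cite{Snow} at this point, as the paper does.
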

\begin{proof} Since $X$ is Fano, one has that $-K_X$ is ample and thus $\vol(-K_X)=(-K_X)^n$. Then the result follows from Theorem \ref{ThmGCD} and the bounds in \cite{Snow}.
\end{proof}

\begin{corollary} Vojta's conjecture in the special case \eqref{EqVojtaGCD} holds when $Y$ is an $L$-rational point and $X$ is a rational homogeneous space of dimension $2\le n\le 10$. 
\end{corollary}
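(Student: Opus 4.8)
The plan is to derive the statement directly from the Theorem immediately preceding it, so that the entire content reduces to an elementary comparison of two constants. First I would apply that Theorem with the ample line sheaf $-K_X$ (ample because a rational homogeneous space is Fano) serving as the auxiliary big sheaf $\Acal$ of \eqref{EqVojtaGCD}. For every $\epsilon'>0$ it furnishes a proper Zariski closed set outside of which
\[
h(Y,x) < \left(\frac{1}{2\sqrt[n]{n!}}+\epsilon'\right)h(-K_X,x) + O(1).
\]
Since the coefficient in Vojta's prediction \eqref{EqVojtaGCD} is $\tfrac{1}{n-1}$, it suffices to establish the strict inequality $\tfrac{1}{2\sqrt[n]{n!}}<\tfrac{1}{n-1}$; one then chooses $\epsilon'$ small enough that $\tfrac{1}{2\sqrt[n]{n!}}+\epsilon'<\tfrac{1}{n-1}$.

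Next I would rewrite the desired inequality in the equivalent polynomial form
\[
(n-1)^n < 2^n\, n!
\]
and verify it by direct computation for each integer $n$ in the range $2\le n\le 10$. The cutoff is sharp: the inequality first fails at $n=11$, where $10^{11}$ exceeds $2^{11}\cdot 11! = 81749606400$, which is precisely what forces the dimension bound. Heuristically the crossover is explained by Stirling's estimate $\sqrt[n]{n!}\sim n/e$ against $(n-1)/2\sim n/2$, together with $1/e<1/2$.

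The one point demanding care, and the main obstacle, is converting the bound with its $O(1)$ term into the exact shape of \eqref{EqVojtaGCD} with the term $\epsilon\cdot h(\Acal,x)$. Setting $\delta=\tfrac{1}{n-1}-\big(\tfrac{1}{2\sqrt[n]{n!}}+\epsilon'\big)>0$, the inequality above reads $h(Y,x)<\tfrac{1}{n-1}h(-K_X,x)-\delta\,h(-K_X,x)+C$ for some constant $C$. Because $-K_X$ is ample and \eqref{EqVojtaGCD} concerns $L$-rational points, Northcott's theorem confines the finitely many $x$ with $h(-K_X,x)\le C/\delta$ to the exceptional locus, and on the complement the error is absorbed; for a general big $\Acal$ one invokes that $\{x\in X(L):h(\Acal,x)\le B\}$ lies in a proper Zariski closed subset to absorb $C$ into $\epsilon\cdot h(\Acal,x)$. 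The boundary dimension $n=10$ leaves very little slack---$9^{10}=3486784401$ sits just under $2^{10}\cdot 10! = 3715891200$---so it is the sharpest case to confirm.
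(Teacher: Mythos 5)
Your proposal is correct and follows the paper's own route: the paper's proof is exactly the one-line observation that $2\sqrt[n]{n!}\ge n-1$ for $2\le n\le 10$, combined with the preceding theorem on rational homogeneous spaces. Your additional details---the equivalent form $(n-1)^n<2^n\,n!$, the failure at $n=11$, and the absorption of the $O(1)$ term via Northcott and bigness of $\Acal$---are accurate elaborations of steps the paper leaves implicit.
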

\begin{proof} This is because in that range of $n$ one has $2\sqrt[n]{n!}\ge n-1$.
\end{proof}

\subsection{Proof of the GCD bound} In this paragraph we prove our GCD bound for algebraic points. We keep the notation of Section \ref{SecGCD1}.

\begin{proof}[Proof of Theorem \ref{ThmGCD}] Let $0<\eta< \vol(\Lcal)$. By the definition of volume of a big line sheaf, there are arbitrarily large values of $s$ such that 
\begin{equation}\label{EqDimBd}
\dim H^0(X,\Lcal^{\otimes s})> \frac{\eta\cdot s^n}{n!}.
\end{equation}
The requirement that a non-zero global section of the sheaf $\Lcal^{\otimes s}$ vanishes along $Y$ with multiplicity at least $\mu$ at each geometric point imposes 
$$
d\cdot\binom{n+\mu}{n}
$$
linear conditions. Note that for large $\mu$  (and fixed $n=\dim X$ and $d$) the previous expression is asymptotic to $d\cdot \mu^n/n!$. 

Let $\delta>0$. There is a large enough choice of $s$ and $\mu$ such that 
$$
\frac{\eta\cdot s^n}{n!}>d\cdot \binom{n+\mu}{n}
\qquad\mbox{ and }\qquad
\frac{s}{\mu} < \sqrt[n]{\frac{d}{\eta}}+\delta
$$
in such a way that \eqref{EqDimBd} holds. Hence, there is a divisor $D$ on $X$ with $\Ocal(D)\simeq\Lcal^{\otimes s}$ and with multiplicity at least $\mu$ at each geometric point of $Y$. In particular, $\mu E$ is a component of $\pi^*D$. For algebraic points $y\in (X'-\pi^*D)(\overline{L})$ this gives
$$
\begin{aligned}
\mu\cdot h(Y,x)&=\mu\cdot h(E,y) + O(1) \le h(\pi^*D,y)+O(1)\\
&=h(D,x)  +O(1)= s\cdot h(\Lcal,x)+O(1)
\end{aligned}
$$
where $x=\pi(y)$. Since $\eta$ can be taken arbitrarily close to $\vol(\Lcal)$, this gives the result.
\end{proof}


\section{Applications} \label{SecExamples}

\subsection{Using differentials}

\begin{lemma}\label{Lemmaq}
Let $X$ be a variety over a number field $L$ and let $f:X\to A$ be a non-constant morphism from $X$ to an abelian variety $A$, defined over $L$. Let $\Lcal$ be a big line sheaf on $X$. Let $S$ be a finite set of places of $L$. Then for every reduced effective $0$-dimensional cycle $Y\subseteq X$ defined over $L$ we have $\tau_S(Y,\Lcal)=0$.
\end{lemma}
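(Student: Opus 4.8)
The plan is to transfer the abelian-variety form of Roth's theorem from $A$ to $X$ along $f$. The key input (see \cite{Serre}) is the following approximation statement: if $\Mcal$ is an ample line sheaf on $A$, if $Q$ is a reduced effective $0$-cycle on $A$ defined over $L$, and if $\epsilon>0$, then
$$
m_S(Q,P) < \epsilon\cdot h(\Mcal,P) + O(1)
$$
holds for all $P\in A(L)$, with no exceptional set. This is the place where the group structure of $A$, through the multiplication-by-$m$ maps, improves the naive Roth exponent all the way down to $0$. Fixing such an $\Mcal$ once and for all, I would deduce $\tau_S(Y,\Lcal)=0$ in three steps.

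First, a reduction by functoriality. Put $Q=f(Y)$, which is a reduced effective $0$-cycle on $A$ defined over $L$. Since $f$ is non-constant its image has positive dimension, so $f^{-1}(Q)$ is a Zariski closed set properly contained in $X$, and clearly $Y\subseteq f^{-1}(Q)$. Using that the proximity function is monotone under inclusion of subschemes together with its functoriality under pullback, for $x\in (X-f^{-1}(Q))(L)$ one obtains
$$
m_S(Y,x) \le m_S(f^{-1}(Q),x)+O(1) = m_S(Q,f(x))+O(1).
$$

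Second, I would apply the approximation theorem on $A$ to the point $f(x)\in A(L)$, giving $m_S(Q,f(x)) < \epsilon\cdot h(\Mcal,f(x)) + O(1) = \epsilon\cdot h(f^*\Mcal,x)+O(1)$ by functoriality of heights. Third, I would invoke the bigness of $\Lcal$ to dominate the right-hand side: since $\Lcal$ is big, a suitable multiple $\Lcal^{\otimes m}$ is isomorphic to $\Ocal(H+E)$ with $H$ an ample divisor and $E$ an effective divisor, so off $\supp E$ one has $m\cdot h(\Lcal,x)\ge h(H,x)+O(1)$; as ample heights dominate the height of any fixed line sheaf up to $O(1)$, there is a constant $C>0$ and a properly contained Zariski closed set $Z_\Lcal$ with $h(f^*\Mcal,x)\le C\cdot h(\Lcal,x)+O(1)$ for $x\in (X-Z_\Lcal)(\overline{L})$. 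Combining the three steps, for $x$ outside the proper Zariski closed set $f^{-1}(Q)\cup Z_\Lcal$ we get $m_S(Y,x) < \epsilon C\cdot h(\Lcal,x)+O(1)$. Since $\epsilon>0$ is arbitrary while $C$ does not depend on $\epsilon$, the product $\epsilon C$ can be made arbitrarily small, whence $\tau_S(Y,\Lcal)=0$.

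The substantive content sits entirely in the cited abelian-variety approximation theorem; within this lemma the only points requiring genuine care are the functoriality and monotonicity of the proximity function (so that the comparison $m_S(Y,x)\le m_S(Q,f(x))+O(1)$ is legitimate precisely on the locus where $f(x)\notin \supp Q$) and the verification that the exceptional loci that appear — namely $f^{-1}(Q)$ and the set coming from the bigness comparison — are all properly contained in $X$. This last point is exactly where the hypothesis that $f$ is non-constant is used, since it guarantees $f^{-1}(Q)\neq X$.
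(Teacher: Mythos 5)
Your proof is correct and follows essentially the same route as the paper's: push $Y$ forward to $Q=f(Y)$, apply the approximation theorem for abelian varieties from \cite{Serre}, pull back along $f$ using monotonicity and functoriality of proximity functions, and then use bigness of $\Lcal$ to dominate $h(f^*\Mcal,x)$ by $C\cdot h(\Lcal,x)$ outside a proper closed subset. The only difference is that you spell out the bigness comparison and the properness of the exceptional loci in more detail than the paper does, which is fine.
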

\begin{proof} Let $Y_0=f(Y)\subseteq A$ and let $\Fcal$ be an ample line sheaf on $A$. Let $\epsilon>0$. By the theorem in Section 7.3 of \cite{Serre} we have
$$
m_S(Y_0,x) < \epsilon \cdot h(\Fcal,x) + O(1)
$$
as $x$ varies in $(A-Y_0)(L)$. Then we have
$$
m_S(Y,x)\le m_S(f^*Y_0, x) < \epsilon\cdot h(f^*\Fcal, x) + O(1)
$$
as $x$ varies in $(X-f^*Y_0)(L)$. The result follows since there is $M>0$ and a properly contained Zariski closed subset $Z\subseteq X$ such that  $h(f^*\Fcal, x)\le M\cdot h(\Lcal,x) + O(1)$ as $x$ varies in $(X-Z)(L)$. 
\end{proof}

\begin{proof}[Proof of Theorem \ref{Thm1}] This follows from Lemma \ref{Lemmaq} by using the Albanese variety of $X$, and then applying Theorem \ref{ThmMC}.
\end{proof}

\begin{proof}[Proof of Theorem \ref{Thm2}] By Theorem \ref{Thm1} it suffices to assume that $\dim H^0(X,\Omega_X^1)=0$. In this case $\Pic^0(X)$ is trivial and numerical equivalence coincides with linear equivalence up to torsion. Hence, using Hilbert's 90 theorem and the assumption $r<n$, we see that there is non-constant a rational function $f:X\dasharrow \Pro^1$ defined over $k$ such that ${\rm div}(f)$ is supported in $D$. Any set of $D$-integral points in $(X-D)(k)$ is mapped by $f$ to a set of $([0]+[\infty])$-integral points in $\Pro^1$, but such sets are finite because $k$ is either $\Q$ or a quadratic imaginary field. Hence the result.
\end{proof}
\subsection{Using the GCD bound} 
\begin{proof}[Proof of theorem \ref{Thm3}] For any effective reduced $0$-dimensional cycle $Y\subseteq X$ defined over $k$ we have
$$
m_\infty(Y,x)\le h(Y,x)+O(1)
$$
as $x$ varies in $X(k)$, thanks to the local decomposition of heights. Now the result follows from Theorems \ref{ThmMC} and \ref{ThmGCD}.
\end{proof}
\subsection{Using Seshadri constants} For a smooth projective variety $X$ over $\C$, an ample line bundle $\Acal$ on $X$, and a point $P\in X(\C)$, there is the Seshadri constant $\varepsilon(P,\Acal)$ which measures positivity of $\Acal$ at $P$. See \cite{SeshadriPrimer} for the precise definitions and several fundamental properties. In particular, as $\Acal$ is ample,  $\varepsilon(P,\Acal)$ is a finite and positive real number.

In a remarkable work, McKinnon and Roth \cite{McKinnonRoth} discovered a deep connection between Seshadri constants and Diophantine approximation. This will be used to prove Theorems \ref{Thm4} and \ref{Thm5}.

\begin{proof}[Proof of Theorem \ref{Thm4}] By basic properties of Seshadri constants (cf. \cite{SeshadriPrimer}), for every geometric point $P\in X(\overline{k})$ and each $j$ we have 
$$
\epsilon(P,\Ocal(D_j))=m_j\cdot \epsilon(P,\Lcal_j)\ge m_j.
$$ 
Let $Y$ be a component over $k$ of $\cap_jD_j$. Note that a $k$-rational point in $X$ can approximate (with respect to the only archimedian place of $k$) at most one geometric point of $Y$. By Theorem 6.2 in \cite{McKinnonRoth} we deduce that 
$$
\tau_\infty(Y,D_j)\le \frac{n+1}{m_j\cdot n}\le \frac{n+1}{2n}<1
$$
since $n=\dim X\ge 2$. Now the result follows from Theorem \ref{ThmMC}.  
\end{proof}

We define the \'etale Seshadri constant by
$$
\hat{\varepsilon}(P,\Acal)=\sup\{\epsilon(Q,\pi^*\Acal) : \pi:X'\to X \mbox{ is finite \'etale and }\pi(Q)=P\}.
$$

\begin{lemma}\label{LemmaLarge} Let $L$ be a number field, $X$ a smooth projective variety over $L$ such that $X_\C$ has large algebraic fundamental group, $Y$ a reduced effective $0$-dimensional cycle on $X$ defined over $L$, $\Acal$ and ample line sheaf on $X$, and $S$ a finite set of places of $L$. Then we have
$$
\tau_S(Y,\Acal)=0.
$$  
\end{lemma}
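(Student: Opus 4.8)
The plan is to reduce the vanishing of $\tau_S(Y,\Acal)$ to the assertion that the étale Seshadri constant $\hat{\varepsilon}(P,\Acal)$ is infinite at every geometric point $P$ of $Y$, and then to extract this infinitude from the large fundamental group hypothesis.

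First I would set up the reduction, exploiting that the approximation coefficient is covariant under finite étale covers. Given a finite étale $\pi\colon X'\to X$, the Chevalley--Weil theorem lets one lift any sequence of rational points approaching $Y$ to $X'$ over a fixed finite extension of $L$, and since $\pi$ is unramified (hence a local analytic isomorphism at each place) both the heights relative to $\pi^*\Acal$ and the local proximities to $\pi^{-1}(Y)$ are preserved up to $O(1)$. Applying the McKinnon--Roth bound (Theorem 6.2 of \cite{McKinnonRoth}, exactly as in the proof of Theorem \ref{Thm4}) on $X'$ at a point $Q$ over $P$, and using that a rational point approximates at most one geometric point of $Y$ at each place while $\dim X'=\dim X=n$, gives a bound of the shape
$$
\tau_S(Y,\Acal)\le \frac{c(n,S)}{\varepsilon(Q,\pi^*\Acal)},
$$
with $c(n,S)$ depending only on $n$ and $\#S$ (coming from the constant $\tfrac{n+1}{n}$ summed over the places of $S$). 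Taking the supremum over all finite étale covers replaces $\varepsilon(Q,\pi^*\Acal)$ by $\hat{\varepsilon}(P,\Acal)$, so it suffices to prove $\hat{\varepsilon}(P,\Acal)=\infty$ for each $P$.

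Next I would analyze Seshadri constants on covers through the curve characterization $\varepsilon(Q,\pi^*\Acal)=\inf_{C'\ni Q}\frac{\pi^*\Acal\cdot C'}{\mult_Q C'}$. For an irreducible curve $C'\ni Q$ with image $C=\pi(C')\ni P$ and $e=\deg(C'\to C)$, functoriality of intersection numbers yields $\pi^*\Acal\cdot C'=e\,(\Acal\cdot C)$, while unramifiedness of $\pi$ keeps the multiplicity under control, $\mult_Q C'\le \mult_P C$; hence
$$
\frac{\pi^*\Acal\cdot C'}{\mult_Q C'}\ge e\cdot\frac{\Acal\cdot C}{\mult_P C}\ge e\cdot\varepsilon(P,\Acal).
$$
Because $X_\C$ has large algebraic fundamental group, the image of $\hat{\pi}_1$ of the normalization of every positive-dimensional subvariety, in particular of every curve $C$ through $P$, is infinite in $\hat{\pi}_1(X_\C)$; consequently for each such $C$ there exist finite étale covers in which the degree $e$ of the lift through $Q$ is arbitrarily large, forcing the slope above any prescribed bound.

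The main obstacle I anticipate is uniformity: the Seshadri constant is an infimum over all curves through $Q$, so a single finite étale cover must stretch every low-slope curve through $P$ at once, whereas the preceding stretching argument a priori produces a cover depending on the individual $C$. To handle this I would fix a target $M$, restrict to the curves $C\ni P$ with $\Acal\cdot C< M\cdot \mult_P C$ (the only ones that can obstruct $\varepsilon(Q,\pi^*\Acal)\ge M\cdot\varepsilon(P,\Acal)$), and seek one finite-index $H\le\hat{\pi}_1(X_\C)$ meeting each of the infinite subgroups $\mathrm{im}(\hat{\pi}_1(\widetilde{C})\to\hat{\pi}_1(X_\C))$ in index at least $M/\varepsilon(P,\Acal)$. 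Producing such an $H$ is precisely where the global strength of the large fundamental group hypothesis, as opposed to infiniteness for a single curve, is needed, and I would establish it by a boundedness-plus-tower argument: pass up the pro-étale tower attached to $\hat{\pi}_1(X_\C)$, where the lifts of all these curves become unbounded, and descend to a finite level at which the index bound holds for the relevant family. Granting this, $\hat{\varepsilon}(P,\Acal)=\infty$ at every geometric point of $Y$, and the reduction of the first step then yields $\tau_S(Y,\Acal)=0$.
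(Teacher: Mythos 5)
Your overall architecture is exactly that of the paper's proof: reduce the vanishing of $\tau_S(Y,\Acal)$ to the statement $\hat{\varepsilon}(P,\Acal)=\infty$ at each geometric point $P$ of $Y$ via McKinnon--Roth, and then extract that statement from the large fundamental group hypothesis. The paper, however, carries out both steps by citation: the reduction is Corollary 8.9 (applied to Theorem 6.2) of \cite{McKinnonRoth}, and the infinitude of the \'etale Seshadri constants is the main theorem of \cite{CerboCerbo}. Your first step is in effect a re-derivation of McKinnon--Roth's Corollary 8.9 (Chevalley--Weil plus invariance of heights and proximities under finite \'etale maps), and that part is sound in outline.

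The genuine gap is in your second step, where you attempt to prove the Di Cerbo--Di Cerbo theorem rather than quote it, and the argument breaks exactly at the obstacle you flag. The curve-by-curve stretching is fine: the projection formula gives $\pi^*\Acal\cdot C'=e\,(\Acal\cdot C)$, and \'etaleness gives $\mult_Q C'\le \mult_P C$. But the passage to a single cover handling all low-slope curves at once is not established. The family of curves $C\ni P$ with $\Acal\cdot C<M\cdot\mult_P C$ is in general neither finite nor bounded (since $\mult_P C$ is unbounded, so is $\Acal\cdot C$), so you need one open finite-index subgroup $H\le\hat{\pi}_1(X_\C)$ satisfying $[\Gamma_C:\Gamma_C\cap H]\ge M/\varepsilon(P,\Acal)$ simultaneously for an infinite collection of infinite subgroups $\Gamma_C$; no compactness argument lets you ``descend to a finite level'' of the pro-\'etale tower, because at every finite level there may be new curves of small slope through the preimage of $P$ whose $\pi_1$-images $H$ does not yet separate. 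Producing this uniformity is precisely the content of \cite{CerboCerbo} --- it is where largeness, combined with nontrivial boundedness arguments for the Seshadri-exceptional curves, actually enters --- and it is a research-level argument in its own right, not a routine tower-and-descent step. As written, your proposal establishes the lemma only modulo this theorem, which is the same logical position as the paper's proof but without the justification that the citation provides.
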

\begin{proof} Let $P$ be a geometric point of $Y$, thus, $P\in X(\overline{L})$. By \cite{CerboCerbo} we have $\hat{\varepsilon}(P,\Acal)=\infty$. Then the result follows from Corollary 8.9 (applied to Theorem 6.2) in \cite{McKinnonRoth}.
\end{proof}

\begin{proof}[Proof of Theorem \ref{Thm5}] The first part of the result follows from Lemma \ref{LemmaLarge} and Theorem \ref{ThmMC}.  The assertion for surfaces is due to the fact that varieties with large algebraic fundamental group contain no rational curves, and then one applies Siegel's theorem.
\end{proof}


\section{Acknowledgments}

N.G.-F. was supported by ANID Fondecyt Regular grant 1211004 from Chile.

H.P. was supported by ANID Fondecyt Regular grant 1230507 from Chile.

We sincerely thank Aaron Levin for very useful feedback on an earlier version of this manuscript.

\end{document}